\renewenvironment {enumerate}%
  {\rule{1mm}{0mm}\begin {oldenumerate}%
    \parskip1ex plus0.5ex \itemsep 0mm \parindent 0mm}%
  {\end {oldenumerate}}
\theoremstyle {plain}
\newtheorem {thm}{Theorem}[section]
\newtheorem {lem}[thm]{Lemma}
\theoremstyle {definition}
\newtheorem {defi}[thm]{Definition}
\theoremstyle {remark}
\newtheorem {rem}[thm]{Remark}
\newtheorem {example}[thm]{Example}
\newtheorem {convention}[thm]{Convention}
\newtheorem*{acknowledgement*}{Acknowledgement}
\newcommand\LOT{\ensuremath{\textrm{LOT}}}
\newcommand\HT{\ensuremath{\textrm{HT}}}
\newcommand\HC{\ensuremath{\textrm{HC}}}
\newcommand\MHT{\ensuremath{\textrm{MHT}}}
\newcommand\MHTF{\ensuremath{\textrm{MHT}_{\textrm{\tiny{F}}}}}
\newcommand\crit{\ensuremath{\textrm{crit}}}
\newcommand\rew{\ensuremath{\textrm{rew}}}
\newcommand\red{\ensuremath{\textrm{red}}}
\newcommand\spp{\ensuremath{\textrm{sp}}}
\newcommand\prev{\ensuremath{\textrm{prev}}}
\newcommand\me{\ensuremath{\mathbf{e}}}
\newcommand\mg{\ensuremath{\mathbf{g}}}
\newcommand\ms{\ensuremath{\mathbf{s}}}
\newcommand\ind{\ensuremath{\textrm{index}}}
\newcommand\poly{\ensuremath{\textrm{poly}}}
\newcommand\Spol{\ensuremath{\textrm{Spol}}}
\newcommand\Sig{\ensuremath{\mathcal{S}}}
\newcommand\Kx{\ensuremath{\mathbb{K}[\underline{x}]}}
\newcommand\Kxm{\ensuremath{\mathbb{K}[\underline{x}]^m}}
\newcommand\KxnG{\ensuremath{\mathbb{K}[\underline{x}]^{n_G}}}
\newcommand\preceqf{\ensuremath{\preceq_{\textrm{F}}}}
\newcommand\precf{\ensuremath{\prec_{\textrm{F}}}}
\newcommand\succf{\ensuremath{\succ_{\textrm{F}}}}
\newcommand\LCM{\ensuremath{\textrm{LCM}}}
\begin{document}
\bibliographystyle{alpha}
\title{On the Criteria of the $F_5$ Algorithm}
\author{Christian Eder}
\address {Christian Eder, Fachbereich Mathematik, TU Kaiserslautern, Postfach 3049, 67653 Kaiserslautern, Germany}
\email{ederc@mathematik.uni-kl.de}
\date{}

\maketitle
\begin{abstract}
Faug\`{e}re's $F_5$ algorithm is one of the fastest known algorithms for the computation of Gr\"obner bases. So far only the $F_5$ Criterion is proved, whereas the second powerful criterion, the Rewritten Criterion, is not understood very well until now. We give a proof of both, the $F_5$ Criterion and the Rewritten Criterion showing their connection to syzygies, i.e. the relations between the S-Polynomials to be investigated by the algorithm. Using the example of a Gr\"obner basis computation stated in \cite{f5} we show how Faug\`{e}re's criteria work, and discuss the possibility of improving the $F_5$ Criterion.  
\end{abstract}

\section{Introduction}
The $F_5$ algorithm stated in 2002 in \cite{f5} is one of the fastest Gr\"obner basis algorithms up to date, but there are still not many implementations due to problems understanding the algorithm and its criteria to detect useless critical pairs of polynomials. \\
There are two main criteria: The $F_5$ Criterion and the Rewritten Criterion. Whereas proofs of the $F_5$ Criterion are given in \cite{f5} and later on in a slightly different way also in \cite{f5rev} there is still no proof for the Rewritten Criterion\footnote{Recently Gash has given another proof of the Rewritten Criterion in \cite{gash}}. Stegers tries to give an idea of how the criterion works, but he is not able to give a full proof.\\
In this paper we prove the correctness of both criteria and show that both are based on a similar relation between syzygies and interdependent S-Polynomials. Tightening the insight of the two criteria by giving examples and constructing the relations between the S-Polynomials using the ideas of the proof, this leads to an idea of an improvement of the $F_5$ Criterion also. We show that this improvement is not possible and there cannot be a generalization of the $F_5$ Criterion. Afterwards we explain the problem of connecting the discussed criteria with the 1st and 2nd Buchberger Criterion. This problem is strongly related to the dependence of Faug\`{e}re's criteria on the signatures, whereas the Buchberger criteria do only care about the polynomial part of the critical pairs investigated.\\
The plan of this paper is the following: In Section \ref{sec:basics} we give basic notations and definitions used in the $F_5$ algorithm. Section \ref{sec:faugere} includes the main theorem of this paper, Theorem \ref{thm:main} in whose proof the correctness of both, the $F_5$ Criterion and the Rewritten Criterion is shown. In the following we give for each criterion 3 detailed examples of how to use the constructive proof of Theorem \ref{thm:main} to see the correctness of deleting the detected pairs in the example given in \cite{f5} Section 8. Afterwards we discuss the question of improving the $F_5$ Criterion on the basis of the constructive proof of the main theorem in Section \ref{sec:improvement} and show its failure. In the Appendix a short note on the current $F_5$ implementation in the computer algebra system \textsc{Singular} is given. \\
Note that in this paper we do not state or prove the correctness or termination of any of the mentioned algorithms, we just prove the correctness of their criteria used, not the correctness and termination of the algorithms/implementations.\\
The proofs of the criteria are a joint work with John Perry. This paper represents my version of the results of our work.
Another paper, which will include the discussion of the criteria as well as our discussion of the termination and correctness of $F_5$, is in preparation by John Perry.
\begin{acknowledgement*}
\end{acknowledgement*}
The proofs of the criteria and the implementation of F5 in a Singular library are joint work with John Perry.

\section{Basic Concepts}
\label{sec:basics}
First of all we need to state and understand the main definitions of Faug\`{e}re's approach to work with polynomials during Gr\"obner bases computations. For this we need to find a relation between polynomials and module elements corresponding to them. This relation adds a new information to the polynomial which is later on used to decide if it is useful or not for the computation of a Gr\"obner basis.
\subsection{Connection Between Polynomials And Module Elements}
We state the main ideas of \cite{f5} whereas we rewrite them in a slightly different way for the sake of simplicity.

\begin{convention}
In the following $K$ is always a field, $\underline{x} = (x_1,\dots,x_n)$, $\mathcal{T}$ denotes the set of terms of the ring $\Kx$. 
Let $F=(f_1,\dots,f_m)$ be a sequence of polynomials $F_i \in \Kx$ for $i \in \{1,\dots,m\}$ such that $I = \langle f_1,\dots,f_m \rangle$.  
Let $<$ denote a term order on $\Kx$. \\
Let $p_1, p_2 \in \Kx$, $u_k = \frac{\LCM(\HT(p_1),\HT(p_2))}{\HT(p_k)}$ for $k \in \{1,2\}$ then we denote the S-Polynomial of $p_1$ and $p_2$ $\Spol(p_1,p_2) =\HC(p_2) u_1 p_1 - \HC(p_1) u_2 p_2$.
\end{convention}

\begin{defi}
\label{def:faugere}
\begin{enumerate}

\item Let $\Kxm$ be an $m$-dimensional module with generators $\me_1,\dots,\me_m$. 
Elements of the form $t \me_i$ such that $t \in \mathcal{T} \subset \Kx$ are called \emph{module terms}. We define the \emph{evaluation map} 
\begin{eqnarray*}
v_F : \Kxm &\rightarrow& \Kx \\
     \me_i &\mapsto& f_i \quad \textrm{for all } i \in \{1,\dots,m\}. 
\end{eqnarray*}
A \emph{syzygy} of $\Kxm$ is an element $\ms \in \Kxm$ such that $v_F(\ms) = 0$.
\item We define the module term ordering $\precf$ on $\Kxm$:
\begin{eqnarray*}
t_i \me_i \precf t_j \me_j   :\Leftrightarrow &(a)& i>j, \textrm{ or} \\
                                              &(b)& i = j \textrm{ and } t_i<t_j
\end{eqnarray*}

\item For an element $\mathbf{g} = \sum_{i=1}^m \lambda_i \me_i \in \Kxm$ we define the \emph{index of $\mathbf{g}$} $\ind(\mathbf{g})$ to be the lowest number $i_0$ such that $\lambda_{i_0} \neq 0$. Let $\ind(\mathbf{g}) = k$, then the module head term of $\mathbf{g}$ w.r.t. $F$ is defined to be $\MHTF(\mathbf{g}) = \HT(\lambda_k) \me_k$.

\item Let $p \in \Kx$ be a polynomial, we call $p$ \emph{admissible w.r.t. $F$} if there exists an element $\mathbf{g} \in \Kxm$ such that $v_F(\mathbf{g}) = p$.

\item 
\label{def:sig}
A \emph{admissible w.r.t. $F$, labeled polynomial $r$} is an element of $\Kxm \times \Kx$ defined by 
\begin{equation*}
r=\big(\Sig(r),\poly(r)\big)
\end{equation*}
where the components of $r$ are defined as follows:
\begin{enumerate}
\item $\poly(r) \in \Kx$ denotes the \emph{polynomial part of $r$}.
 $\Sig(r)$ denotes the \emph{signature of $r$} and is defined to be
\begin{equation*}
\Sig(r) = \MHTF(\mathbf{g}) \textrm{ such that }v_F(\mathbf{g}) = poly(r).
\end{equation*}  
\item The \emph{index of $r$}, $\ind(r)$ is defined to be $\ind(\mathbf{g})$ where
\begin{equation*}
\MHT(\mathbf{g}) = \Sig(r) \textrm{ and } v_F(\mathbf{g}) = \poly(r). 
\end{equation*}
\end{enumerate}
\item Let $r$ be an admissible w.r.t. $F$, labeled polynomial such that $\Sig(r) = t_i \me_i$. Then we define the \emph{term of the signature} to be 
\begin{equation*}
\Gamma(\Sig(r)) = t_i.
\end{equation*}

\item Let $r_1=\big(\Sig(r_1),\poly(r_1)\big)$ and $r_2=\big(\Sig(r_2),\poly(r_2)\big)$ be two admissible labeled polynomials such that $u_2 \Sig(r_2) \precf u_1 \Sig(r_1)$. 
\label{def:sigspoly}
Then 
\begin{equation*}
\Spol(r_1,r_2) = \Big( u_1 \Sig(r_1), \Spol \big(\poly(r_1),\poly(r_2)\big) \Big) 
\end{equation*}
\end{enumerate}
\end{defi}

\begin{rem}
\label{rem:sig}
\begin{enumerate}

\item The notations $\MHTF$ and $\precf$ are due to distinguish Faug\`{e}re's definition of a module term ordering in \cite{f5} with the same approach in a different way of M\"oller, Traverso, and Mora in \cite{traverso}, on which Faug\`{e}re's ideas finding useless critical pairs is based on. \\
Note that the index F of $\MHTF$ does not belong to the sequence $F$ of polynomials in $\Kxm$ also.

\item Note that the definition of the signature in \ref{def:faugere}\ref{def:sig} is different from Faug\`{e}re's one in \cite{f5}. Our understanding of a signature of a labeled polynomial $r$ is equal to Faug\`{e}re's definition of an admissible labeled polynomial $r$.
This is due to the fact that the origin definition of the signature is not useful in the concept of computing Gr\"obner bases. 
Beside from Proposition 1 and Corollary 1 Faug\`{e}re does not use his definition of the signature. When computing the Gr\"obner basis with the $F_5$ algorihm signatures are computed in the sense of Definition \ref{def:faugere}\ref{def:sig}, hence we do not refer to Faug\`{e}re's initially definition when speaking of the signature of an admissible w.r.t. $F$ labeled polynomial, but to the definition given in this paper.

\item Note moreover that the signature $\Sig(r)$ of an adsmissible w.r.t. $F$, labeled polynomial $r$ by Definition \ref{def:faugere}\ref{def:sig} is not uniquely defined. 
\end{enumerate}
\end{rem}

\begin{example}
Assume the sequence $F=(f_1,\ldots,f_m)$.
\begin{enumerate}
\item[(a)] Let $p = f_1$. Then $r=(\me_1,f_1)$ is an admissible labeled polynomial as $v_F(\me_1)=f_1$.
\item[(b)] Again let $p=f_1$. Then $r'=(f_2 \me_1, f_1)$ is also an admissible labeled polynomial.
For this consider the module element $\mg=(f_2 + 1) \me_1 - f_1 \me_2$. It holds that $v_F(\mg)= f_2 f_1 + f_1 - f_1 f_2 = f_1$ and $\MHT(\mg)=f_2 \me_1$.
\end{enumerate}
\end{example}

\begin{rem}
The $F_5$ Algorithm always takes the minimal possible index at the given iteration step during its computations. 
In the above situation the $F_5$ Criterion (see Definition \ref{def:f5crit}) would detect and delete $r'$. 
This is an important point as in the case of $F$ being a regular sequence all of these multiple descriptions of the signature can be detected and only the in some sense minimal one remains in the computations. Thus the signature computed by $F_5$ is unique in the case of an regular input.
\end{rem}
\newpage
\begin{convention}
\begin{enumerate}
\item Due to the fact that in the following all labeled polynomials will be admissible w.r.t. $F$, we drop the reference to which set the admissibility is referred to for a shorter notation. 
\item Let $r$ be an admissible labeled polynomial. For a better legibility let in the following always denote $p = \poly(r)$. So when referring to the signature and admissibility of an element we use the letter $r$, i.e. the labeled polynomial in $\Kxm \times \Kx$, when considering the computations in terms of the polynomials itself we use the letter $p$, i.e. the polynomial in $\Kx$.
\end{enumerate}
\end{convention}

\subsection{The Relation To Computations Of Gr\"obner Bases}
To understand the two main criteria of the $F_5$ algorithm we embed $\Kxm$ into the module $\KxnG$ in a canonical way, i.e. $n_G\geq m$ and $\KxnG = \Kxm \times \mathbb{K}[\underline{x}]^{n_G-m}$.

\begin{convention}
\label{conv:embedding}
In the following $G=\{r_1,\dots,r_{n_G}\}$ always denotes a set of admissible labeled polynomials such that $\poly(G) := \{ p_i \mid r_i \in G \} \supset \{f_1,\dots,f_m\}$. 
We assume that $r_i = (\me_i, f_i)$ for all $i \in \{1,\dots,m\}$ for the rest of this paper.
\end{convention}

\begin{defi}
\begin{enumerate}
\item We define an evaluation map 
\begin{eqnarray*}
v_G : \KxnG &\rightarrow& \Kx \\
     \me_i &\mapsto& p_i \quad \textrm{for all } i \in \{1,\dots,n_G\}. 
\end{eqnarray*}
A \emph{syzygy} of $\KxnG$ is an element $\ms \in \KxnG$ such that $v_G(\ms) = 0$.

\item For each $\me_i$ where $i \in \{1,\dots,n_G\}$ we define the module head term to be 
\begin{equation*}
\MHTF(\me_i) = \Sig(r_i)
\end{equation*}
as defined in \ref{def:faugere}\ref{def:sig} and \ref{def:faugere}\ref{def:sigspoly}.
\end{enumerate}
\end{defi}

\begin{rem}
Note that by Convention \ref{conv:embedding} $v_F(\me_i) = v_G(\me_i)$ for all $i \in \{1,\dots,m\}$.
\end{rem}

Using admissible labeled polynomials to describe Gr\"obner bases for given ideals we need to define an admissible labeled equivalent to the $t$-representation known for polynomials in $\Kx$:

\begin{defi}
\label{def:admrep}
Let $r=(\Sig(r),p)$ be an admissible labeled polynomial, $\mathcal{M} = \{ r_1,\dots,r_{n_\mathcal{M}}\}$ be a set of admissible labeled polynomials, and $t = \HT(p)$.
A representation 
\begin{equation*}
p = \sum_{j = 1}^{n_{\mathcal{M}}} \lambda_j p_j, \quad \lambda_j \in \Kx
\end{equation*}
is an \emph{admissible labeled $t$-representation of (the admissible labeled polynomial) $r$} if $\HT(\lambda_j p_j) < t$ and $\HT(\lambda_j) \Sig(r_j) \preceqf \Sig(r)$ for all $j$. 
\end{defi}

There is an easy connection between usual and admissible labeled $t$-representations: 

\begin{lem}
\label{lem:rep}
Let $r$ be an admissible labeled polynomial. If $r$ has an admissible labeled $t$-representation for $t = \HT(p)$ then $p$ has a $t$-representation.
\end{lem}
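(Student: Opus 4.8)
The statement is really an exercise in forgetting the signature bookkeeping, so the plan is to start from an admissible labeled $t$-representation of $r$ as in Definition \ref{def:admrep} and simply read off its polynomial content. By hypothesis there is a set $\mathcal{M} = \{r_1,\dots,r_{n_{\mathcal{M}}}\}$ of admissible labeled polynomials and coefficients $\lambda_1,\dots,\lambda_{n_{\mathcal{M}}} \in \Kx$ with
\[
p = \sum_{j=1}^{n_{\mathcal{M}}} \lambda_j p_j
\]
such that, for every $j$, both $\HT(\lambda_j p_j) < t$ (with $t = \HT(p)$) and $\HT(\lambda_j)\Sig(r_j) \preceqf \Sig(r)$ hold.

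Next I would keep exactly this identity but view the $p_j = \poly(r_j)$ purely as elements of $\Kx$; the set they range over is $\{p_1,\dots,p_{n_{\mathcal{M}}}\} = \poly(\mathcal{M})$. The only requirement for $\sum_j \lambda_j p_j$ to be a $t$-representation of the polynomial $p$ with respect to this set is the head-term bound on each summand $\lambda_j p_j$, and the bound supplied by Definition \ref{def:admrep} is at least as strong as the one the classical notion of a $t$-representation asks for, since $t = \HT(p)$. The two remaining clauses of Definition \ref{def:admrep} --- admissibility of the $r_j$ and the signature comparison $\HT(\lambda_j)\Sig(r_j) \preceqf \Sig(r)$ --- concern only the module component and play no role in the purely polynomial notion, so discarding them leaves $p = \sum_j \lambda_j p_j$ as a genuine $t$-representation of $p$. (If one insists on $p \neq 0$, this is already implicit in writing $t = \HT(p)$.)

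The main --- and essentially only --- thing to be careful about is matching the two definitions of "representation": checking that each head-term inequality demanded in the definition of a $t$-representation of the polynomial $p$ is among the inequalities already asserted in Definition \ref{def:admrep}, and that the classical notion makes no reference to $\Kxm$ or to signatures. Once that alignment is made the lemma drops out immediately, which is precisely why the text bills it as an "easy connection"; I do not anticipate needing induction, reduction-theoretic input, or any case analysis.
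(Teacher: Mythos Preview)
Your argument is correct and is exactly the paper's approach: the paper's own proof is the one-line ``Clear by Definition \ref{def:admrep}'', and you have simply spelled out that unpacking in detail. There is nothing to add or compare.
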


\begin{proof}
Clear by Definition \ref{def:admrep}.
\end{proof}

\begin{convention}
When speaking of an admissible labeled $t$-representation of an S-Polynomial $\Spol(r_i,r_j)$ in the following without explicitly denoting $t$ we always assume that $t=\LCM\big(\HT(p_i),\HT(p_j)\big)$.
\end{convention}

It follows that we can give a new characterization of a Gr\"obner basis using admissible labeled polynomials.

\begin{thm}
\label{thm:admgroeb}
If for all elements $r_i, r_j \in G$ $\Spol(r_i,r_j)$ has an admissible labeled $t$-representation or $\Spol(p_i,p_j)$ reduces to zero then $\poly(G)$ is a Gr\"obner basis of $I=\langle f_1,\dots,f_m \rangle$.
\end{thm}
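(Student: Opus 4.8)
The plan is to reduce this new characterization to the classical Buchberger criterion via the $t$-representation version of it, using Lemma \ref{lem:rep} as the bridge between the module-level data and the polynomial-level data. Recall the standard fact (a $t$-representation form of Buchberger's criterion): a finite set $H \subset \Kx$ is a Gr\"obner basis of $\langle H \rangle$ if and only if for every pair $h_i, h_j \in H$ the S-polynomial $\Spol(h_i,h_j)$ either reduces to zero modulo $H$ or admits a $t$-representation with respect to $H$ for $t = \LCM(\HT(h_i),\HT(h_j))$. So it suffices to verify the hypotheses of that polynomial statement for $H = \poly(G)$.

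First I would fix an arbitrary pair $r_i, r_j \in G$ and look at the two cases allowed by the hypothesis. If $\Spol(p_i,p_j)$ reduces to zero modulo $\poly(G)$, there is nothing to do: this is exactly one of the two admissible outcomes in the polynomial Buchberger criterion. In the other case, $\Spol(r_i,r_j)$ has an admissible labeled $t$-representation for $t = \LCM(\HT(p_i),\HT(p_j))$; here $\Spol(r_i,r_j)$ is the labeled polynomial from Definition \ref{def:faugere}\ref{def:sigspoly}, whose polynomial part is precisely $\Spol(p_i,p_j)$. Applying Lemma \ref{lem:rep} to the labeled polynomial $\Spol(r_i,r_j)$ immediately yields that $\Spol(p_i,p_j)$ has a $t$-representation for the same $t$. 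Thus in either case the pair $(p_i,p_j)$ satisfies the condition required by the polynomial criterion.

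Since $\poly(G) \supset \{f_1,\dots,f_m\}$ by Convention \ref{conv:embedding}, every element of $\poly(G)$ lies in $I = \langle f_1,\dots,f_m\rangle$, and conversely $\langle \poly(G)\rangle \supseteq \langle f_1,\dots,f_m\rangle = I$; admissibility of each $r_i$ (i.e.\ $p_i = v_F(\mathbf{g})$ for some $\mathbf{g} \in \Kxm$, hence $p_i \in I$) gives the reverse inclusion, so $\langle \poly(G)\rangle = I$. Combining this with the previous paragraph, the $t$-representation Buchberger criterion tells us that $\poly(G)$ is a Gr\"obner basis of $\langle \poly(G)\rangle = I$, which is the assertion.

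The only genuine subtlety — and the step I would be most careful about — is making sure the notion of ``$t$-representation'' used in the classical criterion matches the one obtained from Definition \ref{def:admrep} via Lemma \ref{lem:rep}, in particular that the bound $\HT(\lambda_j p_j) < t$ is uniform over all $j$ and that $t = \LCM(\HT(p_i),\HT(p_j))$ is the relevant term (this is exactly what the Convention preceding the theorem pins down). Once that bookkeeping is in place, the argument is essentially a translation: the labeled structure is only needed to produce the hypothesis, and Lemma \ref{lem:rep} discards it cleanly so that a purely polynomial version of Buchberger's theorem can finish the job. No properties of the signatures themselves are needed for this direction — they become essential only later, when one wants to discard S-polynomials, not when one certifies a Gr\"obner basis.
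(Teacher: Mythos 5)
Your argument is correct and is essentially the paper's own proof spelled out in full: the paper simply says ``Clear by the characterization of a Gr\"obner basis and Lemma \ref{lem:rep},'' and you supply exactly that reduction, using Lemma \ref{lem:rep} to strip the labels and invoking the $t$-representation form of Buchberger's criterion. The one detail you add that the paper leaves implicit --- checking $\langle \poly(G)\rangle = I$ via admissibility and Convention \ref{conv:embedding} --- is a genuine (if small) piece of needed bookkeeping and is handled correctly, though the first clause of that sentence swaps the justification for the two inclusions and would read more cleanly if reordered.
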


\begin{proof}
Clear by the characterization of a Gr\"obner basis and Lemma \ref{lem:rep}.
\end{proof}

\section{Faugere's Criteria}
\label{sec:faugere}
Whereas a Gr\"obner basis $G$ can be characterized by Theorem \ref{thm:admgroeb} it does not improve its computation, on the contrary we require even more, the polynomials need to be labeled and admissible w.r.t. a given set and their representations need to fulfill another criterion on their signatures. 
As the $F_5$ algorithm constructs new elements exactly such that they have admissible labeled $t$-representations, Faug\`{e}re uses two criteria to check if the S-Polynomial of a critical pair needs to be computed and reduced, or if the critical pair is useless for the computation of $G$. \\ 
To decide if one of the criteria holds, the signatures of the labeled polynomials are used. By this means Faug\`{e}re uses these new requirements on an admissible labeled $t$-representation stated in the previous section to get information on the relations between S-Polynomials which help to decide the necessity of them. \\
We state these criteria and prove their correctness, but we do not explain the $F_5$ algorithm in detail, we refer to \cite{f5} or \cite{f5rev} for a deeper insight in $F_5$.

\begin{defi}[$F_5$ Criterion]
\label{def:f5crit}
Let $(r_i,r_j) \in G \times G$ be a critical pair. $\Spol(r_i,r_j)$ is \emph{not normalized} iff for $u_k r_k$, $k=i$ or $k=j$, there exists $r_\prev \in G$ such that 
\begin{eqnarray*}
\ind(r_\prev) &>& \ind(r_k) \textrm{ and} \\
\HT(p_\prev) &\mid& u_k \Gamma\big(\Sig(r_k) \big) 
\end{eqnarray*} 
If there exists no such $r_\prev \in G$ then $\Spol(r_i,r_j)$ is \emph{normalized}.
\end{defi}

\begin{defi}[Rewritten Criterion]
\label{def:rewcrit}
Let $(r_i,r_j) \in G \times G$ be a critical pair. $\Spol(r_i,r_j)$ is \emph{rewritable} iff for $u_k r_k$, $k=i$ or $k=j$, there exist $r_v,r_w \in G$ such that
\begin{eqnarray*}
\ind(r_k) &=& \ind(\Spol(r_v,r_w)) \textrm{ and} \\ 
\Gamma\Big(\Sig\big((\Spol(r_v,r_w)\big)\Big) &\mid& u_k \Gamma\big(\Sig(r_k) \big)
\end{eqnarray*}
If there exist no such $r_v,r_w \in G$ then $\Spol(r_i,r_j)$ is called \emph{not rewritable}.
\end{defi}

\begin{thm}
\label{thm:main}
Let $\mathcal{L} \subset G \times G$ be such that for each pair $(r_i,r_j) \in \mathcal{L}$ $\Spol(r_i,r_j)$ is
\begin{enumerate}
\item normalized, and
\item not rewritable.
\end{enumerate}
Furthermore, if for each such pair $(r_i,r_j) \in \mathcal{L}$ $\Spol(r_i,r_j)$ has an admissible labeled $t$-representation or $\Spol(p_i,p_j)$ reduces to zero then $\poly(G)$ is a Gr\"obner basis of $I=\langle f_1,\dots,f_m \rangle$.
\end{thm}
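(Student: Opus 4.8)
The plan is to derive the theorem from Theorem~\ref{thm:admgroeb}: its hypothesis asks that \emph{every} pair $(r_i,r_j)\in G\times G$ have an S-polynomial with an admissible labeled $t$-representation or with $\Spol(p_i,p_j)$ reducing to zero, so it is enough to upgrade the stated hypothesis on $\mathcal L$ to one on all of $G\times G$. Because there are finitely many indices and $<$ is a term order, $\precf$ is a well-ordering on module terms, and the natural line of attack is Noetherian induction on $S:=\Sig\big(\Spol(r_i,r_j)\big)=u_i\Sig(r_i)$, where the pair is ordered so that $u_j\Sig(r_j)\precf u_i\Sig(r_i)$ as in Definition~\ref{def:sigspoly}. (When $u_i\Sig(r_i)=u_j\Sig(r_j)$ the S-polynomial is not even defined; this case is disposed of directly, since the two leading module terms of $\HC(p_j)u_i\me_i-\HC(p_i)u_j\me_j$ cancel, exhibiting a module representative of $\Spol(p_i,p_j)$ of strictly smaller signature.) Fix a pair $(r_i,r_j)$ and assume the conclusion for all pairs whose S-polynomial has signature $\precf S$. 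If $(r_i,r_j)\in\mathcal L$ we are done, so suppose not; then $\Spol(r_i,r_j)$ is not normalized or is rewritable, and in each case the aim is to rewrite $\Spol(p_i,p_j)$ as a $\Kx$-linear combination of images of S-polynomials of strictly smaller signature (plus possibly a reduction to zero) and then apply the induction hypothesis.

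Suppose first $\Spol(r_i,r_j)$ is not normalized (Definition~\ref{def:f5crit}): there are $k\in\{i,j\}$ and $r_\prev\in G$ with $\ind(r_\prev)>\ind(r_k)=:\ell$ and $\HT(p_\prev)\mid u_k\Gamma(\Sig(r_k))$. Take $\mg\in\Kxm$ with $v_F(\mg)=p_k$ and $\MHTF(\mg)=\Sig(r_k)$, and $\mg_\prev\in\Kxm$ with $v_F(\mg_\prev)=p_\prev$. Then $\sigma:=p_\prev\,\me_\ell-f_\ell\,\mg_\prev$ is a syzygy of $\Kxm$, and since $\ind(\mg_\prev)\ge\ind(r_\prev)>\ell$ its module head term is $\MHTF(\sigma)=\HT(p_\prev)\,\me_\ell$. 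Multiplying $\sigma$ by the term $u_k\Gamma(\Sig(r_k))/\HT(p_\prev)$ gives a syzygy with module head term exactly $u_k\Sig(r_k)$, and subtracting a suitable scalar multiple of it from $u_k\mg$ yields $\mg''\in\Kxm$ with $v_F(\mg'')=u_kp_k$ and $\MHTF(\mg'')\precf u_k\Sig(r_k)$. When $k=i$, substituting $u_ip_i=v_F(\mg'')$ into $\Spol(p_i,p_j)=\HC(p_j)u_ip_i-\HC(p_i)u_jp_j$ expresses $\Spol(p_i,p_j)$ through data whose signatures lie $\precf S$, and the induction hypothesis supplies admissible labeled $t$-representations that recombine into one for $\Spol(p_i,p_j)$ with bound $S$; the case $k=j$ produces instead the same polynomial together with a module representative of $u_jp_j$ of signature $\precf u_j\Sig(r_j)$, and is the delicate one (see below).

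Suppose instead $\Spol(r_i,r_j)$ is rewritable (Definition~\ref{def:rewcrit}): there are $k$ and $r_v,r_w\in G$ with $\ind(r_k)=\ind(\Spol(r_v,r_w))$ and $\Gamma\big(\Sig(\Spol(r_v,r_w))\big)\mid u_k\Gamma(\Sig(r_k))$. Put $c:=u_k\Gamma(\Sig(r_k))/\Gamma\big(\Sig(\Spol(r_v,r_w))\big)$, a term; then $c\cdot\Spol(r_v,r_w)$ has signature $u_k\Sig(r_k)$. If $c\neq1$ this signature is $\precf S$, so the induction hypothesis gives an admissible labeled $t$-representation of $\Spol(r_v,r_w)$; if $c=1$ then $\Spol(r_v,r_w)$ carries the signature $S$ itself, and one invokes that it is the pair actually kept for this signature, hence in $\mathcal L$ --- or reached from $\mathcal L$ by a terminating chain of rewrites --- so that the stated hypothesis applies to it directly. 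Multiplying that representation by $c$ and substituting $c\cdot\poly(\Spol(r_v,r_w))$ for $u_kp_k$, modulo the lower-signature part of $\Spol(p_i,p_j)$, again presents $\Spol(p_i,p_j)$ through strictly-smaller-signature S-polynomials, and the induction hypothesis finishes the step as before.

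I expect the main obstacle to be precisely the bookkeeping that makes these substitutions legitimate. One must check compatibility of $\precf$ with multiplication by terms and with the operation $\MHTF$; one must verify that the rewritten right-hand sides genuinely involve only S-polynomials of signature \emph{strictly} below $S$ --- transparent on the maximal side $k=i$, but on the non-maximal side $k=j$ it is $S$ itself that does not decrease, so there the primary induction on $S$ must be supplemented by a secondary well-founded argument (e.g.\ on the companion signature, or by showing such a pair is always superseded, for a fixed signature $S$, by a pair that is in $\mathcal L$ or handled earlier); and one must confirm that recombining several admissible labeled $t$-representations never pushes a summand's head term up to $t=\LCM(\HT(p_i),\HT(p_j))$ nor a cofactor's signature above $S$, so that what one obtains is again an \emph{admissible labeled} $t$-representation and not merely an ordinary one. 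For the Rewritten Criterion there is in addition the circularity hazard in the borderline case $c=1$: ruling it out needs the rewrite relation on critical pairs to be well-founded, i.e.\ $\Spol(r_v,r_w)$ to be strictly older than $\Spol(r_i,r_j)$ in the order the pairs are produced, and establishing this well-foundedness is, I anticipate, the subtlest part of the whole argument.
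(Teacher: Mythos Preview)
Your overall shape---Noetherian induction on the signature $S=u_i\Sig(r_i)$, killing the leading module term by a suitable syzygy---matches the paper's, and your treatment of the not-normalized case starts off exactly as the paper does (it builds the same principal syzygy $p_\prev\me_\ell-f_\ell\mg_\prev$). But there is a genuine gap in the step where you pass from ``smaller-signature module representative'' to ``admissible labeled $t$-representation,'' and in the rewritable case this gap shows up as an outright error.

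In the rewritable branch you write ``substituting $c\cdot\poly(\Spol(r_v,r_w))$ for $u_kp_k$.'' These are \emph{not} the same polynomial: all you know is that $c\,\Sig(\Spol(r_v,r_w))=u_k\Sig(r_k)$, i.e.\ the two admit module representatives with the same $\MHTF$. No polynomial identity follows from that, so no substitution is possible. The same defect is latent in your non-normalized paragraph: once you have $\mg''\in\Kxm$ with $v_F(\mg'')=u_kp_k$ and $\MHTF(\mg'')\precf S$, you control signatures but not head terms, whereas Definition~\ref{def:admrep} requires \emph{both} $\HT(\lambda_j)\Sig(r_j)\preceqf S$ and $\HT(\lambda_j p_j)<t$. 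The induction hypothesis, which speaks only about S-polynomials of pairs in $G$, does not apply to $\mg''$ directly. The paper closes this gap by working in $\KxnG$ rather than $\Kxm$: it forms the difference syzygy $\ms_\crit$ (Equation~(\ref{proof:eqcrit})), evaluates it to zero, and then exploits the forced head-term cancellation to locate some $u_{\ell_0}\me_{\ell_0}$ with $u_{\ell_0}\HT(p_{\ell_0})=u_i\HT(p_i)$. This produces the triangle
\[
\Spol(p_i,p_j)=\nu_1\,\Spol(p_i,p_{\ell_0})+\nu_2\,\Spol(p_{\ell_0},p_j),
\]
and only now does the inductive/recursive step apply, to genuine S-polynomials with the correct head-term bound. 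Your identification of the $k=j$ subtlety and the $c=1$ circularity is accurate (the paper handles the former in Remark~\ref{rem:proofcrit}(e) and the latter by appealing to the algorithm's rewrite order), but the missing head-term argument is the main obstruction to your plan as written.
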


\begin{proof}
Let $(r_i,r_j) \notin \mathcal{L}$. Then $\Spol(r_i,r_j)$ is either not normalized or rewritable. We have to show that all such S-Polynomials either have an admissible labeled $t$-representation for $t=\LCM\big(\HT(p_i),\HT(p_j)\big)$ or reduce to zero. \\
We can assume that $u_j \Sig(r_j) \precf u_i \Sig(r_i)$ and w.l.o.g. we can assume that in each case $u_i r_i$ is the admissible labeled polynomial detected by one or both of the two criteria (see Remark \ref{rem:proofcrit}).
For this let $r_i=(t_i \me_k, p_i)$.
\begin{enumerate}
\label{proof:casenotnorm}
\item Assume that $u_i r_i$ is not normalized. In this case there exists an element $r_\prev$ in $G$ with $\ind(r_\prev) > k$ and $\Gamma\big(u_i \Sig(r_i) \big) = u_i t_i = \lambda \HT(p_\prev)$ for some $\lambda \in \mathcal{T}$. This can be translated to a relation between two syzygies in $\KxnG$: We receive a principal syzygy given by $p_\prev$ and $f_k$, namely 
\begin{equation*}
\ms_{\prev,k} = p_\prev \me_k - f_k \me_\prev \in \KxnG. 
\end{equation*}
For $r_i$ there are two possibilities:
\begin{enumerate}
\item If $i \in \{1,\dots,m\}$ then we can construct a trivial syzygy $\ms_i = \me_i - \me_i$. Note that in this case $k=i$.
\label{proof:casetriv}
\item If $i \notin \{1,\dots,m\}$ then $r_i$ is the result of a reduction of an S-Polynomial, such that there exists a syzygy 
\begin{equation*}
\ms_i = \sum_{\ell = k}^{n_i} a_\ell^i \me_\ell - \me_i
\end{equation*}
where $n_i$ denotes the number of elements in the subsequent Gr\"obner basis $G$ before $r_i$ is added. It holds that $\MHTF(\ms_i) = \Sig(r_i)$. 
 
\end{enumerate}
Either way $\MHTF(u_i \ms_i) = \MHTF(\lambda \ms_{\prev,k})$ by construction and we can compute their difference:
\begin{align}
\label{proof:eqnotnorm}
\lambda \ms_{\prev,k} - u_i \ms_i &= \left(\lambda \LOT(p_\prev) - u_i \LOT(a_k^i) \right) \me_k + \sum_{\ell=k+1}^{n_i} a_\ell^i \me_\ell + \notag \\
                                  &\quad + \lambda f_k \me_\prev - u_i \me_i.
\end{align}
By construction 
\begin{align*}
\HT\big(\lambda \LOT(p_\prev) - u_i \LOT(a_k^i)\big) \Sig(r_k) &\precf u_i \Sig(r_i) \\
\HT(a_\ell^i) \Sig(r_\ell) &\precf u_i \Sig(r_i) \textrm{ for all } \ell \in \{k+1,\dots,n_i\} \\
\lambda \HT(f_k) \Sig(r_\prev) &\precf u_i \Sig(r_i).
\end{align*}
Note that in case \ref{proof:casetriv} $u_i \LOT(a_k^i)$ is zero.
As $\ms_i$ and $\ms_{\prev,k}$ are syzygies it holds that $v_G(u_i \ms_i - \lambda \ms_{\prev,k}) = 0$.
\item Assume that $u_i r_i$ is rewritable. In this case there exists an $\Spol(r_v,r_w)$ such that $\ind(\Spol(r_v,r_w)) = k$ and $\lambda \in \mathcal{T}$ such that $\lambda \Gamma\Big(\Sig\big((\Spol(r_v,r_w)\big)\Big) = \Gamma\big(u_k \Sig(r_k) \big)$.  
Again we can translate these data to a relationship between two syzygies. For $r_i$ we have the same possibilities as mentioned in the case of $u_ir_i$ not normalized above, in short:
\begin{enumerate}
\item If $i \in \{1,\dots,m\} \Rightarrow \ms_i = \me_i - \me_i$.
\item If $i \notin \{1,\dots,m\} \Rightarrow \ms_i = \sum_{\ell = k}^{n_i} a_\ell^i \me_\ell - \me_i$.  
\end{enumerate}
This time we also need to have a closer look at the syzygy given by $\Spol(r_v,r_w)$. Based on the implementation of the Rewritten Criterion in the $F_5$ algorithm $\Spol(r_v,r_w)$ is not rewritable, as otherwise $\Spol(r_i,r_j)$ would be detected by the S-Polyinomial which rewrites $\Spol(r_v,r_w)$. $\Spol(r_v,r_w)$ has been already or eventually will be reduced to a new element $r_\rew \in G$, so it has a $t$-representation for $t < \LCM\big(\HT(p_v),\HT(p_w)\big)$, or it has been reduced to zero w.r.t. $G$. In either way we receive a syzygy 
\begin{equation*}
\ms_{v,w} = \sum_{\ell=k}^{n_\rew} a_\ell^\rew \me_\ell - \alpha \me_\rew
\end{equation*}
where $n_\rew$ denotes the number of elements in the subsequent Gr\"obner basis $G$ before $r_\rew$ is possibly added. $\alpha=0$ if $\Spol(r_v,r_w)$ reduces to zero, and $\alpha=1$ otherwise. It holds that $\MHTF(\ms_{v,w}) = \Sig\big(\Spol(r_v,r_w)\big)$. \\
Analogously to the case of $u_ir_i$ being not normalized we compute the difference of the two syzygies $u_i \ms_i$ and $\lambda \ms_{v,w}$ which fulfill the relation $\MHTF(u_i \ms_i) = \MHTF(\lambda \ms_{v,w})$:
\begin{align}
\label{proof:eqrew}
\lambda \ms_{v,w} - u_i \ms_i &= \left( \lambda \LOT(a_k^\rew) - u_i \LOT(a_k^i)\right) \me_k + \sum_{\ell=k+1}^{n_\textrm{min}} (\lambda a_\ell^\rew - u_i a_\ell^i) \me_\ell \notag \\
                              &\quad + \sum_{\ell'=n_\textrm{min}+1}^{n_\textrm{max}} \lambda a_{\ell'}^\rew \me_{\ell'} - \lambda \alpha \me_\rew + u_i \me_i \notag \\
                              &= \left( \lambda \LOT(a_k^\rew) - u_i \LOT(a_k^i)\right) \me_k + \sum_{\ell=k+1}^{n_\textrm{max}} (\lambda a_\ell^\rew - u_i a_\ell^i) \me_\ell  \notag \\
			      &\quad - \lambda \alpha \me_\rew + u_i \me_i
\end{align}
where we define $n_\textrm{min}=\textrm{min}\{n_i,n_\rew\}$, $n_\textrm{max}=\textrm{max}\{n_i,n_\rew\}$. Note that in Equation (\ref{proof:eqrew}) 
\begin{eqnarray*}
a_\ell^i = 0 \textrm{ for } \ell \in \{n_i+1,\dots,n_\textrm{max} \} \textrm{ or} \\ 
a_\ell^\rew = 0 \textrm{ for } \ell \in \{ n_\rew+1,\dots,n_\textrm{max} \},
\end{eqnarray*}
depending on the relation of $n_i$ and $n_\rew$.
It holds that $v_G(\lambda \ms_{v,w} - u_i \ms_i) = 0$, moreover 
\begin{align*}
\HT\big( \lambda \LOT(a_k^\rew)- u_i \LOT(a_k^i)   \big) \Sig(r_k) &\precf u_i \Sig(r_i) \\
\HT \big(  \lambda a_\ell^\rew - u_i a_\ell^i  \big) \Sig(r_\ell) &\precf u_i \Sig(r_i) \textrm{ for all } \ell \in \{k+1,\dots,n_\textrm{max}\}.
\end{align*}
Note that $\lambda \Sig(r_\rew) =_{\textrm{F}} u_i \Sig(r_i)$ by construction.
\end{enumerate}
In both of the stated cases a new syzygy is built, we can summarize (\ref{proof:eqnotnorm}) and (\ref{proof:eqrew}) in one syzygy $\ms_\crit$: 
\begin{equation}
\label{proof:eqcrit}
\ms_\crit = \sum_{\ell=k}^{n_\textrm{max}} a_\ell \me_\ell - \mu \me_\crit + u_i \me_i
\end{equation}
where $\HT(a_\ell) \Sig(r_\ell) \precf u_i \Sig(r_i)$ for all $\ell \in \{k,\dots,n_\textrm{max}\}$ and $\mu \Sig(r_\crit) \preceqf u_i \Sig(r_i)$. \\
As $v_G(\ms_\crit) = 0$ every head term of each evaluated element from $\ms_\crit$ needs to be reduced. 
Thus we find two elements $a_\ell \me_\ell$ and $a_{\ell'} \me_{\ell'}$ in $\ms_\crit$ such that 
\begin{equation*}
\HT\big(a_\ell v_G(\me_\ell)\big) = \HT\big(a_{\ell'} v_G(\me_{\ell'})\big). 
\end{equation*}
This corresponds to a multiple of $\Spol(r_\ell,r_{\ell'})$ where both, $u_\ell r_\ell$ and $u_{\ell'} r_{\ell'}$ have a signature lower or equal to the one of $u_i r_i$ w.r.t. $\precf$. 
These S-Polynomials are either rewritable/not normalized and can be rewritten in the same way without increasing their signatures or head terms, or they reduce to an element $r_\red \in G$ such that $\Sig(r_\red) = \Sig\big(\Spol(r_\ell,r_{\ell'})\big)$ and $\HT(p_\red) < u_\ell \HT(p_\ell)$, or they reduce to zero w.r.t. $G$. 
This building, reducing and deleting of new S-Polynomials stops after a finite number of steps because of the finiteness of the polynomials and their signatures. \\
We stop this process when we have found an element $u_{\ell_0} \me_{\ell_0}$ in $\ms_\crit$  such that 
\begin{equation*}
u_{\ell_0} \HT\left(v_G(\me_{\ell_0})\right) = u_i \HT(p_i).
\end{equation*}
Thus we have found a multiple of $\Spol(r_i,r_{\ell_0})$. We have to distinguish the following cases:
\begin{enumerate}
\item If $u_{\ell_0} r_{\ell_0} \neq u_j r_j$ then we can represent $\ms_\crit$ from Equation (\ref{proof:eqcrit}) by 
\begin{equation*}
\ms_\crit = \sum_{\ell=k}^{n'} b_\ell \me_\ell - u_{\ell_0} \me_{\ell_0} + u_i \me_i
\end{equation*}
where $\HT(b_\ell p_\ell) < u_i \HT(p_i)$ for all $\ell \in \{k,\dots,n' \}$ and $n' = n_\textrm{max}+1$. Note that we can assume $\mu \me_\crit$ to be part of the sum. Using the evaluation we get
\begin{align*}
0 &= \sum_{\ell = k}^{n'} b_\ell p_\ell - u_{\ell_0} p_{\ell_0} + u_i p_i \\
0 &= \sum_{\ell = k}^{n'} b_\ell p_\ell + \nu_1 \Spol(p_i,p_{\ell_0}) \textrm{ for some } \nu_1 \in \mathcal{T} \\
&\Rightarrow \nu_1 \Spol(p_i,p_{\ell_0}) =- \sum_{\ell = k}^{n'} b_\ell p_\ell.
\end{align*}
From this equation we receive an admissible labeled $t_1$-representation for $t_1 = \nu_1 \LCM\big(\HT(p_i),\HT(p_{\ell_0})\big)$.\\
On the other hand we notice that $u_j \HT(p_j) = u_{\ell_0} \HT(p_{\ell_0})$ and thus there exists a multiple $\nu_2 \Spol(r_{\ell_0},r_j)$. This S-Polynomial is already reduced (possibly to zero) w.r.t. $G$ or detected by the two criteria and can be rewritten in the same way, where this process has to stop after a finite number of times. In any case it will be investigated in the $F_5$ algorithm and we can assume it to reduce to zero or to have an admissible labeled $t_2$-representation for $t_2 = \nu_2 \LCM\big(\HT(p_{\ell_0}),\HT(p_j)\big)$. Altogether we have a relation between three S-Polynomials:
\begin{equation*}
\Spol(p_i,p_j) = \nu_1 \Spol(p_i,p_{\ell_0}) + \nu_2 \Spol(p_{\ell_0},p_j).
\end{equation*}
Possibly there are further reductions of these S-Polynomials or detections by the two criteria, but all of these do not increase the signature and do lower the head term of the polynomials. \\
Assuming the reduction of $\Spol(r_i,r_{\ell_0})$ and $\Spol(r_{\ell_0},r_j)$ and noting the signatures of all elements which are $\preceqf u_i \Sig(r_i)$ we have an admissible labeled $t$-representation of $\Spol(r_i,r_j)$.

\item If $u_{\ell_0} r_{\ell_0} = u_j r_j$ then the represention of $\ms_\crit$ is given by
\begin{equation*}
\ms_\crit = \sum_{\ell=k}^{n'} b_\ell \me_\ell -u_j \me_j + u_i \me_i
\end{equation*}
where $\HT(b_\ell p_\ell) < u_i \HT(p_i)$ for all $\ell \in \{k,\dots,n'\}$ and $n'=n_\textrm{max}+1$. Again using the evaluation we get
\begin{align*}
0 &= \sum_{\ell = k}^{n'} b_\ell p_\ell -u_j p_j + u_i p_i \\
0 &= \sum_{\ell = k}^{n'} b_\ell p_\ell + \Spol(p_i,p_j) \\
&\Rightarrow \Spol(p_i,p_j) =- \sum_{\ell = k}^{n'} b_\ell p_\ell
\end{align*}
Again assuming further reductions or detections by the two criteria inside $\sum_{\ell=k}^{n'} b_\ell p_\ell$ from this equality we directly receive an admissible labeled $t$-representation of $\Spol(r_i,r_j)$ for $t = \LCM\big(\HT(p_i),\HT(p_j)\big)$. 
\end{enumerate}
Thus $\poly(G)$ is a Gr\"obner basis for $I$.
\end{proof}

\begin{rem} \
\label{rem:proofcrit}
\begin{enumerate}
\item In the case of $u_i r_i$ being rewritable by $\lambda r_\rew$ it is possible that $u_{\ell_0} r_{\ell_0} = \lambda r_\rew$ also. Then by the same construction as stated in the proof we get
\begin{equation*}
\Spol(p_i,p_\rew) = -\sum_{\ell = k}^{n'} b_\ell p_\ell.
\end{equation*}
In this case $\HT(b_\ell) \Sig(r_\ell) \precf u_i \Sig(r_i) = \lambda \Sig(r_\rew)$ for all $\ell \in \{k,\dots,n'\}$. Thus $\Spol(r_i,r_\rew)$ can be rewritten by a linear combination of elements in $G$ with lower signatures, thus we have found an admissible labeled $t$-representation of $\Spol(r_i,r_\rew)$ for $t = \LCM\big(\HT(p_i),\HT(p_\rew)\big)$. \\
Note that this also includes the case where $u_{\ell_0} r_{\ell_0} = u_j r_j = \lambda r_\rew$. 
\item In the case $u_{\ell_0} r_{\ell_0} \neq u_j r_j$ we denote the second computed S-Polynomial 
\begin{equation*}
\Spol(r_{\ell_0},r_j) = u_{j, {\ell_0}} r_{\ell_0} - u_{ {\ell_0},j} r_j. 
\end{equation*}
Of course it can happen that  $u_{j, {\ell_0}} \Sig(r_{\ell_0}) \precf u_{ {\ell_0},j} \Sig(r_j)$. In this case we would compute $\Spol(r_j,r_{\ell_0})$, but this would just lead to a difference in sign and would not change the arguments of the proof, hence we have omitted the distinction between these  two possibilities above.

\item Setting $n'=n_\textrm{max}+1$ is only necessary in the case where $n_\rew = \textrm{max}\{n_i,n_\rew\}$ and $u_{\ell_0} r_{\ell_0} \neq \lambda r_\rew$, i.e. if $\lambda p_\rew$ is inside $\sum_{\ell=k}^{n'} b_\ell p_\ell$. Since $n_\textrm{max}$ denotes the number of elements before $r_\rew$ enters $G$ in this case, $n' = \rew$. In all other cases $b_{n'} = 0$.
\item When building S-Polynomials inside $\ms_\crit$ until we end up with $u_{\ell_0} \me_{\ell_0}$ the signatures do not increase. 
This is due to the $F_5$ algorithm: If there is a reductor $r_\red$ of an element $r_\spp$, where $r_\spp$ denotes the possibly already reduced S-Polynomial investigated by $F_5$ in this step, 
such that there exists $u_\red \in \mathcal{T}$ where $u_\red \HT(p_\red) = \HT(p_\spp)$ and $u_\red \Sig(r_\red) \succf \Sig(r_\spp)$ than two elements will be returned by the procedure \verb|TopReduction|: The (in this step of the algorithm) not top-reduced element $r_\spp$ for which the reductor was found and a new S-Polynomial $\Spol(r_\red,r_\spp)$ with $\Sig\big(\Spol(r_\red,r_\spp)\big) = u_red \Sig(r_\red)$. From this point on both elements are investigated separately from each other for further reductions. 
So if we have defined an S-Polynomial in the beginning there is no change of its signature in the whole reduction process, and thus there is no increasing of the signatures in the proof.

\item Note that if we assume $u_j r_j$ to be not normalized/rewritable in the beginning instead of $u_i r_i$ the proof would work exactly the same way, it would be even easier since 
\begin{equation*}
u_\ell \Sig(r_\ell) \preceqf u_j \Sig(r_j) \precf u_i \Sig(r_i) \textrm{ for all } \ell \in \{k,\dots,n_\textrm{max}\},
\end{equation*}
and due to this relation of the signatures it cannot happen that $u_{\ell_0} r_{\ell_0} = u_i r_i$.

\end{enumerate}

\end{rem}

\section{Examples Of The Criteria Used In The $F_5$ Algorithm}
\label{sec:examples}
In this section we give some examples of the $F_5$ Criterion and the Rewritten Criterion. For this purpose we use the example given in both \cite{traverso} Section 7 and \cite{f5} Section 8. We will not state the whole computations and refer to the afore-mentioned papers for more details.\\
Note that we do not explain in detail the difference between the computations done in both papers, but we show the critical pair the Rewritten Criterion detects to be useless whereas the criterion of M\"oller, Traverso and Mora stated in \cite{traverso} does not detect it. \\
The proof of Theorem \ref{thm:main} gives us a constructive explanation of the criteria which we use in every of the following computations.\\
In this example we want to compute the Gr\"obner basis of the ideal $I$ given by
\begin{eqnarray*}
f_1 &=& yz^3 - x^2t^2\\
f_2 &=& xz^2 - y^2t \\
f_3 &=& x^2y - z^2t 
\end{eqnarray*}
in $\mathbb{Q}[x,y,z,t]$ with degree reverse lexicographical ordering $x>y>z>t$. As agreed in Convention \ref{conv:embedding} $r_i := (\me_i,f_i)$ for $i \in \{1,2,3\}$.  

\subsection{Some Examples Of The Rewritten Criterion}
\label{sec:exrew}
We give three examples of the Rewritten Criterion. In the first example we rewrite a multiple of an element from $\{f_1,\dots,f_m\}$, in the second one we generalize this attempt for arbitrary elements in $G$ during the computation of $F_5$. In the last example we see that the Rewritten Criterion also covers direct paraphrases in which we get an admissible labeled $t$-representation of the investigated S-Polynomial immediately.
\begin{enumerate}
\item $P_8 = x^2 r_1 - z^3 r_3$ is rewritable since $x^2 \Sig(r_1) = x \Sig(r_6)$. 
Thus for the computation of $r_6$ we have received a syzygy $\mathbf{s}_6 = x \mathbf{e}_1 - yz \mathbf{e}_2 - \mathbf{e}_6$ such that $x \MHTF(\mathbf{s}_6) = x^2 \mathbf{e}_1$. For $r_1$ we get an trivial syzygy $\ms_1 = \me_1 - \me_1$. Computing the difference of multiples of these syzygies we get
\begin{equation*}
x^2 \ms_1 + x \ms_6 = x^2 \me_1 - xyz \me_2 - x \me_6 
\end{equation*}
where $x^2 \HT(p_1) = xyz \HT(p_2)$. So when evaluating we get a reduction of a multiple of $\Spol(p_1,p_2)$:
\begin{equation*}
x \Spol(p_1,p_2) = x^2 p_1 - xyz p_2 = x p_6
\end{equation*}
where $x \Sig(r_6) =_\textrm{F} x^2 \Sig(r_1)$. On the other hand we compute a second multiple of an S-Polynomial with $xyz p_2$ and $z^3 p_3$ $z \Spol(p_2,p_3)$ which is already reduced to the element $z p_4$. Using the relation
\begin{equation*}
\Spol(p_1,p_3) = x \Spol(p_1,p_2) + z \Spol(p_2,p_3)
\end{equation*}
$\Spol(r_1,r_2)$ has an admissible labeled $t$-representation.

\item $P_{15} = xz r_6 - y^3t r_2$ is rewritable since $xz \Sig(r_6) = z \Sig(r_7)$. Again we have 
\begin{eqnarray*}
\mathbf{s}_7 &=& x \mathbf{e}_6 - z \mathbf{e}_4 - \mathbf{e}_7, \\
\mathbf{s}_6 &=& \mathbf{e}_6 - \mathbf{e}_6.
\end{eqnarray*}
To get the related S-Polynomials we compute 
\begin{eqnarray*}
xz \mathbf{s}_6 + z \mathbf{s}_7 &=& xz \me_6 - xz \me_6 + xz \me_6 - z^2 \me_4 - z \me_7 \\
                                 &=& xz \me_6 -z^2 \me_4 - z \me_7
\end{eqnarray*}
The next reduction would be done with $xz \me_6$ resp. $xz p_6$. Thus we receive that $\HT(x^2 p_4) = \HT(xz p_6)$ which leads to $z \Spol(p_6,p_4)$. Clearly we also get an S-Polynomial for $y^3t p_2$, namely $\Spol(p_4,p_2)$ and together we receive
\begin{equation*}
\Spol(p_6,p_2) = z \Spol(p_6,p_4) + \Spol(p_4,p_2),
\end{equation*}
an admissible labeled $t$-representation of $\Spol(r_6,r_2)$.
\item $P_{18} = x r_8 - y^2 t r_4$ is rewritable since $x \Sig(r_8) = z \Sig(r_9)$. 
Note that we do not use the completely reduced polynomial $r_9$ which Faug\`{e}re computes in the given example in \cite{f5} but the reduction given from the $F_5$ algorithm, i.e. $r_9 = (x^3 \me_1,-x^5t^2+y^2z^3t^2)$. 
We have
\begin{eqnarray*}
\ms_8 &= z \me_7 - \me_5 - \me_8 \\
\ms_9 &= x \me_7 - z^3t \me_2 - \me_9
\end{eqnarray*}
In the same way we compute
\begin{eqnarray*}
x \ms_8 - z \ms_9 = z^4t \me_2 - x \me_5 - x \me_8 + z \me_9.
\end{eqnarray*}
The evaluation of the first two elements on the right-hand side of the equation is equal to $-\Spol(p_5,p_2)$ which can be rewritten as $y^2t p_4$ such that we get that 
\begin{align*}
v_G(x \ms_8) - v_G(z \ms_9) = v_G(y^2t \me_4) - v_G(x \me_8) + v_G(z \me_9) &= 0 \\
                                      -\Spol(p_8,p_4)+ z p_9 &= 0
\end{align*}
such that $\Spol(r_8,r_4)$ is useless for further computations.
\end{enumerate}

\begin{rem}
Note that the last example above is the one reduction to zero which is not detected in \cite{traverso}. Using a criterion for detecting syzygies, i.e. relations between S-Polynomials, too, M\"oller, Traverso and Mora are using other descriptions of the polynomials and do not give the polynomials a label or signature. The syzygies and polynomials computed during the algorithm are strictly separated in their attempt, whereas in Faug\`{e}re's idea the syzygies do not need to be computed, as their module head terms can be deduced by the signatures of the computed polynomials. 
\end{rem}

\subsection{Some Examples Of The $F_5$ Criterion}
In the following three examples of the $F_5$ Criterion are shown. 
The first example explains the direct paraphrase in which we can find an admissible $t$-represenation of the investigated S-Polynomial immediately. In the second example we end with a relation between the S-Polynomial in question and two other S-Polynomials, one of them is already detected to be not normalized (first example), the other investigated as the third example.
\begin{enumerate}

\item $P_{11} = z^2 r_6 - y^2 t r_1$ is not normalized since $z^2 \Sig(r_6) = xz^2 \me_1$ and $xz^2=\HT(r_2)$.
So we compute the syzygies 
\begin{eqnarray*}
\ms_{1,2} &=& r_2 \me_1 - r_1 \me_2 \\
          &=& x z^2 \me_1 - y^2t \me_1 - yz^3 \me_2 + x^2t^2 \me_2\\
z^2 \ms_6 &=& xz^2 \me_1 - yz^3 \me_2 - z^2 \me_6.
\end{eqnarray*}
In the same way as in Section \ref{sec:exrew} we compute their difference to see the relations of S-Polynomials:
\begin{eqnarray*}
z^2 \ms_6 - \ms_{1,2} &=& y^2 t \me_1 - x^2t^2 \me_2 - z^2 \me_6, \textrm{ where} \\
y^2 t\HT( p_1) &=& y^3 z^3 t = z^2 \HT(p_6), \textrm{ and} \\
x^2 t^2 \HT(p_2) &<& y^3 z^3 t.
\end{eqnarray*}
Thus we receive the following relation of polynomials when evaluating the difference of syzygies above:
\begin{align*}
v_G(z^2 \ms_6) - v_G(\ms_{1,2}) = v_G(y^2 t \me_1) - v_G(x^2t^2 \me_2) - v_G(z^2 \me_6) &= 0 \\
                                                   - \Spol(p_6,p_1) - x^2 t^2 p_2  &= 0.
\end{align*}
It follows that $\Spol(p_6,p_1)$ is reduced to zero by $x^2t^2 p_2$.
\item Another pair which is deleted by the $F_5$ Criterion is the pair $(r_7,r_6)$ which corresponds to $\Spol(r_7,r_6)= (x^2 y^3 \me_1,y^3 r_7 - z^4 r_6)$. Since $y^3 \Sig(r_7) = x^2 y^3 \me_1$ and $x^2y^3 = y^2 \HT(r_3)$ it is not normalized. Note that in this example also $z^4 r_6$ is not normalized since $z^4 \Sig(r_6) = x z^4 \me_1$ and $x z^4 = z^2 \me_2$. \\
Again we compute two syzygies we want to subtract from each other
\begin{eqnarray*}
y^2\ms_{1,3} &=& y^2r_3 \me_1 - y^2r_1 \me_3 \\
             &=& x^2y^3 \me_1 - y^2z^2t \me_1 - y^3z^3 \me_3 + y^2x^2t^2 \me_3\\
y^3 \ms_7 &=& x y^3 \me_6 - y^3 z \me_4 - y^3 \me_7 \\
          &=& x^2 y^3 \me_1 - x y^4 z \me_2 - y^3 z \me_4 - y^3 \me_7.
\end{eqnarray*}
This leads to the computation of the difference of both syzygies
\begin{eqnarray*}
y^3 \ms_7 - y^2 \ms_{1,3} = y^2 z^2 t \me_1 - x y^4 z \me_2 - y^3 z \me_4 - y^3 \me_7 - y^3z^3 \me_3 + x^2y^2t^2 \me_3
\end{eqnarray*}
where some more S-Polynomials are computed but already at this point one can see that $y^2z^2t \HT(p_1) = y^3 \HT(p_7)$ and we get $-y^2 \Spol(p_7,p_1)$. Again from the construction we also can compute that $y^2z^2t \HT(p_2) = z^4 \HT(p_6)$ and we get $z^2 \Spol(p_6,p_1)$. \\
$\Spol(r_6,r_1)$ was investigated in Case $(a)$, $\Spol(r_7,r_1)$ is also deleted by the $F_5$ Criterion, so we have a closer look at it in the following example. We get 
\begin{equation*}
\Spol(p_7,p_6) = y^2 \Spol(p_7,p_1) - z^2 \Spol(p_6,p_1),
\end{equation*}
an admissible labeled $t$-representation of $\Spol(r_7,r_6)$.

\item $\Spol(r_7,r_1) = (x^2y \me_1,y r_7 - z^2t r_1)$ is not normalized since $y \Sig(r_7) = x^2 y \me_1$ and $x^2y = \HT(r_3)$. 
We have already computed the two syzygies
\begin{eqnarray*}
\ms_{1,3} &=& r_3 \me_1 - r_1 \me_3 = x^2y \me_1 - z^2t \me_1 - yz^3 \me_3 + x^2t^2 \me_3,\\ 
y \ms_7 &=& x^2y \me_1 - xy^2z \me_2 - yz \me_4 - y \me_7.
\end{eqnarray*}
So we get 
\begin{equation*}
y \ms_7 - \ms_{1,3} = x^2 y \me_1 - xy^2z \me_2 - yz \me_4 - y \me_7 - x^2 y \me_1 + z^2t \me_1 +yz^3 \me_3 - x^2t^2 \me_3.
\end{equation*}
Firstly $yz\Spol(p_2,p_3)$ is built which cancels with $yzp_4$ such that in the end we get
\begin{align*}
v_G(y \ms_7) - v_G(\ms_{1,3}) = -v_G(y \me_7) + v_G(z^2t \me_1) - v_G(x^2t^2 \me_3) &= 0 \\
                                              -\Spol(p_7,p_1) - x^2 t^2 p_3    &= 0.
\end{align*}
Thus $\Spol(r_7,r_1)$ is useless and can be deleted.
\end{enumerate}

\section{Improving The $F_5$ Criterion?}
\label{sec:improvement}
Having a closer look at Equation (\ref{proof:eqrew}) in the proof of Theorem \ref{thm:main} we note that instead of the not normalized case we have $\lambda \Sig(r_\rew) =_{\textrm{F}} u_i \Sig(r_i)$ in the rewritable case, so we do not need to require after cancellation of the $\MHT$s that all elements besides $u_i \me_i$ have signature lower than $u_i \Sig(r_i)$ w.r.t. $\precf$, it is enough to claim that there is no element in the syzygy having a signature bigger than $u_i \Sig(r_i)$ w.r.t. $\precf$ . 
Thus the question arises if the requirement of the $F_5$ Criterion that $\ind(r_\prev) < \ind(r_i)$ is too restrictive. \\
In the following we give a generalized definition of the $F_5$ Criterion due to the assumption stated above and prove that this does not give any improvement.

\begin{defi}[Improved $F_5$ Criterion]
\label{def:f5critimp}
Let $(r_i,r_j) \in G \times G$ be a critical pair. $\Spol(r_i,r_j)$ is \emph{not completely normalized} iff for $u_k r_k$ where $k=i$ or $k=j$ there exists $r_\prev \in G$ such that one of the following cases holds: 
\begin{enumerate}
\item $\Spol(r_i,r_j)$ is not normalized.
\item There exists $\lambda \in \mathcal{T}$ such that 
\begin{eqnarray*}
\ind(r_\prev) &=& \ind(r_k) =: k_0 \\
\lambda \HT(p_\prev) &=& u_k \Gamma\big(\Sig(r_k)\big) \\
\HT(f_{k_0}) \Gamma\big(\Sig(r_\prev) \big) &<& \HT(p_\prev).
\end{eqnarray*}
\end{enumerate}
If there exists no such $r_\prev \in G$ then $\Spol(r_i,r_j)$ is \emph{completely normalized}.
\end{defi}

\begin{rem}
Note that from the discussion in the beginning of this section it seems to make sense to generalize the last inequality in part (b) of Definition \ref{def:f5critimp} to
\begin{equation*}
\HT(f_{k_0}) \Gamma \big( \Sig(r_\prev) \big) \leq \HT(p_\prev).
\end{equation*}
In the proof of the following lemma we show that this equality exists, but it is a trivial case which cannot be used as a criterion to detect useless critical pairs while computing Gr\"obner bases. See Remark \ref{rem:f5impequal} for a more detailed explanation.
\end{rem}

Next we show that the Improved $F_5$ Criterion detects the same critical pairs than the $F_5$ Criterion. Thus Defintion \ref{def:f5critimp} is no improvement of Definition \ref{def:f5crit}.

\begin{lem}
\label{lem:contraf5imp}
Let $(r_i,r_j) \in G \times G$ be a pair of admissible labeled polynomials, then $\Spol(r_i,r_j)$ is
\begin{equation*}
\textrm{normalized} \Leftrightarrow \textrm{completely normalized}
\end{equation*}
\end{lem}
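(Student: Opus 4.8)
The plan is to prove the two implications separately, with the forward direction being essentially immediate from the definitions. First I would observe that ``not normalized'' is precisely case (a) in the definition of ``not completely normalized'' (Definition~\ref{def:f5critimp}), so if $\Spol(r_i,r_j)$ is \emph{not} completely normalized it is in particular \emph{not} normalized unless case (b) applies; equivalently, ``normalized $\Rightarrow$ completely normalized'' needs no work and the content is the reverse implication. So the real task is to show that if $\Spol(r_i,r_j)$ is completely normalized (no $r_\prev$ witnessing (a) \emph{or} (b)) then it is normalized (no $r_\prev$ witnessing the original $F_5$ condition). Contrapositively: if $\Spol(r_i,r_j)$ is not normalized, I must show it falls under case (a) --- which is trivially true. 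Hence the genuine content is the opposite: show that case (b) can never be satisfied by a legitimately-arising labeled polynomial $r_\prev \in G$ in a way that is not already covered by case (a). That is, I would show that the extra witnesses allowed by (b) are either vacuous or reduce to (a).

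The heart of the argument concerns case (b). Suppose $r_\prev \in G$ has $\ind(r_\prev) = \ind(r_k) = k_0$, and there is $\lambda \in \mathcal{T}$ with $\lambda\,\HT(p_\prev) = u_k\,\Gamma(\Sig(r_k))$ and $\HT(f_{k_0})\,\Gamma(\Sig(r_\prev)) < \HT(p_\prev)$. The key step is to analyze $\Sig(r_\prev)$. Since $\ind(r_\prev) = k_0$, write $\Sig(r_\prev) = s_\prev\,\me_{k_0}$ for a term $s_\prev = \Gamma(\Sig(r_\prev))$. Because $r_\prev$ is admissible, there is $\mg \in \Kxm$ with $v_F(\mg) = p_\prev$ and $\MHTF(\mg) = \Sig(r_\prev) = s_\prev\,\me_{k_0}$. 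I would now examine the polynomial $p_\prev - v_F$ restricted to the $k_0$-component: the leading behaviour forces $\HT(s_\prev f_{k_0})$ to be compared with $\HT(p_\prev)$. The hypothesis $\HT(f_{k_0})\,s_\prev < \HT(p_\prev)$ says the $\me_{k_0}$-part of $\mg$ contributes a \emph{strictly lower} head term than $\HT(p_\prev)$; since $\ind(\mg) = k_0$ is the lowest index appearing in $\mg$, every other component has index $> k_0$ and index $\le m$ is impossible for indices between, so the head term of $p_\prev$ must be produced by $f_{k_0}$ itself up to the leading term --- contradiction, \emph{unless} $p_\prev$ is essentially $f_{k_0}$-free in its leading term, i.e. the situation degenerates. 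I expect this degeneration to be exactly the ``trivial case'' the remark preceding the lemma refers to: $\HT(f_{k_0})\,\Gamma(\Sig(r_\prev)) = \HT(p_\prev)$ with equality (not the strict inequality of (b)), which the remark already flags as not usable as a criterion.

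Concretely, the clean way to run this: from admissibility of $r_\prev$, the signature $s_\prev\,\me_{k_0}$ comes from some $\mg = \sum_{\ell \ge k_0}\mu_\ell\,\me_\ell$ with $\HT(\mu_{k_0}) = s_\prev$. Then $p_\prev = v_F(\mg) = \sum_\ell \mu_\ell f_\ell$, and I compare head terms: if $\HT(s_\prev f_{k_0}) < \HT(p_\prev)$, the head term of $p_\prev$ must come from cancellation or from higher-index terms, but then a \emph{different} admissible representative of $p_\prev$ exists with strictly smaller signature in index, which contradicts that $\Sig(r_\prev)$ as computed by $F_5$ is the minimal-index one (invoke the remark after the second Example, which asserts $F_5$ always uses the minimal index); so $\ind(r_\prev)$ would be $> k_0$, pushing us into the regime where $\ind(r_\prev) > \ind(r_k)$, which is exactly the original $F_5$ condition, i.e. case (a) already fired. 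Either way no \emph{new} pair is detected.

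\textbf{Main obstacle.} The delicate point is the second bullet of case (b) --- the strict inequality $\HT(f_{k_0})\,\Gamma(\Sig(r_\prev)) < \HT(p_\prev)$ --- and showing it forces a contradiction with the minimality of the index/signature that the $F_5$ algorithm maintains. This requires being careful about what ``$r_\prev \in G$ as produced by $F_5$'' guarantees about its signature: one must use that signatures in $G$ are not arbitrary admissible labels but the specific minimal ones the algorithm assigns (cf. the Remark following the second Example and the Remark in Definition~\ref{def:faugere}\ref{def:sig} that signatures are not unique in general). Pinning down that the strict inequality is incompatible with this minimality --- and separately noting the boundary equality case is the trivial, non-criterion case described in the preceding remark --- is where the real work lies; the rest is bookkeeping over indices.
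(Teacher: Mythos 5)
Your decomposition is right: the only nontrivial content is showing that clause (b) of Definition~\ref{def:f5critimp} can never be satisfied (or collapses into clause (a)), and you correctly identify the strict inequality $\HT(f_{k_0})\,\Gamma\big(\Sig(r_\prev)\big) < \HT(p_\prev)$ as the crux. But the argument you give to rule this inequality out for $p_\prev \notin \{f_1,\dots,f_m\}$ has a genuine gap. You pass from ``the head term of $p_\prev$ is produced by the higher-index components $\mu_\ell f_\ell$, $\ell > k_0$, of $\mg$'' to ``therefore a different admissible representative of $p_\prev$ of strictly higher index exists, contradicting $F_5$'s choice of minimal label.'' The second does not follow from the first: the $\me_{k_0}$-component $\mu_{k_0}$ can perfectly well remain nonzero and be needed to produce lower-order terms of $p_\prev$, so no module element with vanishing $\me_{k_0}$-component representing $p_\prev$ is actually exhibited, and the minimality principle you invoke never becomes applicable. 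You flag this as ``where the real work lies'' but do not supply that work.

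The paper's proof closes the gap with a different, structural argument about how $F_5$ actually produces the elements of $G$, and it is worth contrasting. It splits on whether $p_\prev$ is an input polynomial or arose from a reduction. If $p_\prev = f_{k_0}$, then $\Gamma\big(\Sig(r_\prev)\big) = 1$ and clause (b) would read $\HT(f_{k_0}) < \HT(f_{k_0})$, absurd. If instead $p_\prev$ is the reduction of some S-Polynomial of index $k_0$, then $\HT(f_{k_0})\,\Gamma\big(\Sig(r_\prev)\big)$ is precisely the head term that gets \emph{cancelled} when that S-Polynomial is formed, and since subsequent reduction only lowers the head term while the signature --- hence $\Gamma$ --- is preserved, one obtains $\HT(p_\prev) < \HT(f_{k_0})\,\Gamma\big(\Sig(r_\prev)\big)$, the inequality opposite to (b). So the operative invariant is that $\HT(p) \leq \Gamma\big(\Sig(r)\big)\HT(f_{k_0})$ for every $r \in G$ of index $k_0$, with equality only for the input generator itself, and this follows by induction over the $F_5$ construction of $G$ (head-term cancellation in S-Polynomial formation and reduction) rather than from any index-minimality of the labeling. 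To repair your proposal you would need to state and prove that invariant explicitly; the step you currently rely on --- from ``head term comes from higher-index components'' to ``a higher-index admissible representation exists'' --- is not justified.
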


\begin{proof}
We have to show that there exist no $\Spol(r_i,r_j) \in G \times G$ and $r_\prev \in G$ such that part (b) of Definition \ref{def:f5critimp} is fulfilled. \\
Assume the contrary, for $k=i$ or $k=j$ let $\ind(r_\prev) = \ind(r_k) = k_0$, $\lambda \in \mathcal{T}$ such that $\lambda \HT(p_\prev) = \Gamma \big(\Sig(r_k) \big)$ and $\HT(f_{k_0}) \Gamma\big(\Sig(r_\prev) \big) < \HT(p_\prev)$. We assume that $r_\prev$ fulfills only part (b) of Definition \ref{def:f5critimp}. We show that there exists no such element in $G$. For this we have to distinguish two cases:
\begin{enumerate}
\item If $p_\prev \in \{f_1,\dots,f_m\}$ then $p_\prev = f_{k_0}$ as $\ind(r_\prev)= k_0$. Furthermore $\Gamma \big( \Sig(r_\prev) \big) = 1$. By our assumptions 
\begin{eqnarray*}
\HT(f_{k_0}) \Gamma\big(\Sig(r_\prev)\big) &<& \HT(p_\prev) \\
\Rightarrow \HT(f_{k_0}) \cdot 1 &<& \HT(f_{k_0})
\end{eqnarray*}
which is a contradiction.
\item If $p_\prev \notin \{f_1,\dots,f_m\}$ then 
\begin{enumerate}
\item $p_\prev$ is the reduction of $\Spol(f_{k_0},p_\ell)$ for some $r_\ell \in G$ such that it holds that $\ind(r_\ell) > k_0$. Let $u_{k_0}=\frac{\LCM\big(\HT(f_{k_0}),\HT(p_\ell)\big)}{\HT(f_{k_0})}$ then it follows that $u_{k_0} = \Gamma\big(\Sig(r_\prev)\big)$ and  
\begin{equation*}
\HT(f_{k_0}) \Gamma \big(\Sig(r_\prev) \big) > \HT(p_\prev)
\end{equation*}
as the head terms of $\Spol(f_{k_0},p_\ell) = \HT(f_{k_0}) \Gamma\big(\Sig(r_\prev)\big)$ cancel during the reduction step.
\item $p_\prev$ is the reduction of $\Spol(p_u,p_v)$ for $p_u,p_v \in G$. Inductively using the same argument as above we receive that 
\begin{equation*}
\HT(f_{k_0}) \Gamma \big(\Sig(r_\prev) \big) > \HT(p_\prev).
\end{equation*}
\end{enumerate}
Thus both subcases contradict our assumptions about $p_\prev$.
\end{enumerate}
Thus we have shown that there exists no admissible labeled polynomial $r_\prev \in G$ which fulfills part (b) of Definition \ref{def:f5critimp}.
\end{proof}
\begin{rem}
\label{rem:f5impequal}
\begin{enumerate}
\item From part (a) of the proof of Lemma \ref{lem:contraf5imp} we see that the only possible case which would still hold the condition on the syzygies of the proof of the main theorem, namely no signature bigger than the one of the not normalized/rewritable element, is
\begin{equation*}
\HT(f_{k_0}) \Gamma \big( \Sig(r_\prev) \big) = \HT(p_\prev).
\end{equation*}
Note that this is only the case when $p_\prev=f_{k_0}$ such that it leads to a trivial, not principal, syzygy, i.e.
\begin{eqnarray*}
\ms_{\prev,k_0} &=& p_\prev \me_{k_0} - f_{k_0} \me_\prev \\
                &=& f_{k_0} \me_{k_0} - f_{k_0} \me_{k_0} \\
                &=& 0 \in \Kxm.
\end{eqnarray*}
It follows that we do not receive a syzygy to compute relations of S-Polynomials and we cannot delete $\Spol(r_i,r_j)$ from the computations of $G$ without any other detection of further criteria.
\item From the point of view that the $F_5$ Criterion computes principal syzygies in $\Kxm$ it is easy to see that the criterion cannot be generalized relaxing the requirement on the index of $r_\prev$, as a principal syzygy with two elements of the same index will always end up in the trivial case stated above.
\end{enumerate}
\end{rem}

We have shown that the $F_5$ Criterion cannot be generalized in the sense of relaxing the condition on the indices.

\newpage
\begin{appendix}
\section{Implementation in \textsc{Singular}}
This appendix discusses another result of John Perry's and the author's joint work on the $F_5$ Algorithm, a freely-available library for the open-source computer algebra system \textsc{Singular}. 
\subsection{Sources}
This \verb|f5_library.lib| is an implementation of a slightly improved $F_5$ Algorithm in \textsc{Singular}. 
You can get it here:
\begin{center}
\verb|http://www.math.usm.edu/perry/Research/f5_library.lib|.
\end{center}
This library is implemented in the interpreted language in \textsc{Singular}, thus it is slow, but useful for testing the algorithms behaviour. You should also download a second library, \verb|f5ex.lib|, which consists of lots of precasted examples:
\begin{center}
\verb|http://www.math.usm.edu/perry/Research/f5ex.lib|.
\end{center}
A kernel implementation of $F_5$ in \textsc{Singular} is in preparation by the author. For more information about \textsc{Singular} visit
\begin{center}
\verb|http://www.singular.uni-kl.de/index.html|.
\end{center}
A good introduction to \textsc{Singular} and its applications in commutative algebra resp. algebraic geometry can be found in \cite{singular}.
\subsection{Using the Implementation}
The usage of \verb|f5_library.lib| is best explained in a little example: 
Let us assume the computation of the example given in Section 8 in \cite{f5}. Once \textsc{Singular} is started, it awaits an input after the prompt ``>''. 
Every statement has to be terminated by ``;''. Firstly we have to link the two above mentioned libraries to \textsc{Singular}, for this copy both libraries in your \textsc{Singular} folder. As \verb|f5ex.lib| is called internally by \verb|f5_library.lib| it is enough to link this one. 
The ideal to be computed can be generated by the command \verb|fmtm()|, which defines a basering and the ideal \verb|i|. 
In the following the output of \textsc{Singular} is accentuated by ``\verb|==>|''.
The following steps should be self-explanatory, otherwise use the online manual available at
\begin{center}
\verb|http://www.singular.uni-kl.de/Manual/latest/index.htm|.
\end{center}
\begin{verbatim}
LIB ``f5_library.lib'';
fmtm();
i;
==>i[1]=yz3-x2t2
==>i[2]=xz2-y2t
==>i[3]=x2y-z2t
ideal g;
g = basis(i);
==> cpu time for gb computation: 70/1000 sec
g;
==>g[1]=xz2-y2t
==>g[2]=x2y-z2t
==>g[3]=yz3-x2t2
==>g[4]=y3zt-x3t2
==>g[5]=xy3t-z4t
==>g[6]=z5t-x4t2
==>g[7]=y5t2-x4zt2
==>g[8]=x5t2-z2t5
\end{verbatim}
Typing \verb|help f5_library.lib;| resp. \verb|help f5ex.lib;| one gets more information about implemented procedures and their usage. \\
Moreover, there is an Gr\"obner basis algorithm implemented using the methods and ideas for detecting useless pairs given by Gebauer and M\"oller in \cite{gm} for the purpose of comparing both algorithms.
One can use it in the same way as explained above, changing \verb|basis()| to \verb|gm_basis()|. 

\end{appendix}

\end{document}